\documentclass[10pt,a4paper]{article}

\usepackage{latexsym,amsfonts,amsmath,amssymb,mathrsfs,url,color,amsthm}
\usepackage{graphicx,cite}
\usepackage{siunitx}

\usepackage{natbib}
 \bibpunct[, ]{(}{)}{,}{a}{}{,}%
 \def\BIBand{and}%

\def\EMAIL#1{#1}

\newtheorem{theorem}{Theorem}
\newtheorem{lemma}{Lemma}
\newtheorem{assumption}{Assumption}

\newtheorem{remark}{Remark}

\usepackage[dvipdfm,
linkbordercolor={1 0 0},
citebordercolor={0 1 0},
urlbordercolor={0 1 1}]{hyperref}

\newcommand{\fracs}[2]{{\textstyle \frac{#1}{#2}}}

\def \PP  {{\mathbb{P}}}
\def \EE  {{\mathbb{E}}}
\def \VV  {{\mathbb{V}}}
\def \RR  {{\mathbb{R}}}

\def \ofd  {{\overline{f_d}}}

\def \eps {{\varepsilon}}
\def \Eps {{\epsilon}}

\def \one {{\bf{1}}}

\def \XN  {{\overline{X}_{\!N}}}

\usepackage{color}

\allowdisplaybreaks

\begin{document}

\title{Decision-making under uncertainty:\\ using MLMC for efficient estimation of EVPPI}

\author{Michael B.~Giles\thanks{Mathematical Institute, University of Oxford, Oxford, United Kingdom, OX2 6GG, \EMAIL{mike.giles@maths.ox.ac.uk}}, 
Takashi Goda\thanks{School of Engineering, University of Tokyo, Tokyo 113-8656, Japan, \EMAIL{goda@frcer.t.u-tokyo.ac.jp}}}

\date{\today}

\maketitle

\begin{abstract}
In this paper we develop a very efficient approach to the Monte Carlo estimation
of the expected value of partial perfect information (EVPPI) that measures 
the average benefit of knowing the value of a subset of uncertain parameters 
involved in a decision model.  The calculation of EVPPI is inherently a nested 
expectation problem, with an outer expectation with respect to one random 
variable $X$ and an inner conditional expectation with respect to 
the other random variable $Y$. We tackle this problem by using a Multilevel Monte 
Carlo (MLMC) method \citep{giles08} in which the number of inner samples for $Y$ 
increases geometrically with level, so that the accuracy of estimating the inner 
conditional expectation improves and the cost also increases with level.  We construct 
an antithetic MLMC estimator and provide sufficient assumptions on a decision model 
under which the antithetic property of the estimator is well exploited, and consequently 
a root-mean-square accuracy of $\eps$ can be achieved at a cost of $O(\eps^{-2})$.  
Numerical results confirm the considerable computational savings compared to the 
standard, nested Monte Carlo method for some simple testcases and a more realistic 
medical application.
\end{abstract}

\section{Introduction}

The motivating applications for this research come from two
apparently quite different fields, the funding of medical research
and the exploration and exploitation of oil and gas reservoirs.  
The common element in both cases is decision making under a large 
degree of uncertainty.

In the medical case \citep{alc04,bkoc07}
let $X$ and $Y$ represent independent random
variables representing the uncertainty in the effectiveness of 
different medical treatments. In the absence of any knowledge 
of $X$ or $Y$, then given a finite set of possible treatments $D$, 
the optimal choice $d_{opt}$ is the one which maximises 
$\EE\left[ f_d (X,Y) \right]$ where $f_d(X,Y)$ represents some 
measure of the patient outcome, such as QALY's (quality-adjusted 
life-year), measured on a monetary scale with a larger value being better.
Thus, with no knowledge, the optimal outcome on average is
\[
\max_{d\in D}\ \EE\left[ f_d (X,Y) \right].
\]
On the other hand, given perfect information on $X$ and $Y$, 
through carrying out some new medical research, the best 
treatment choice maximises $f_d (X,Y)$, giving the overall 
average outcome 
\[
\EE\left[\, \max_{d\in D}\, f_d (X,Y) \right].
\]
In the intermediate situation, if $X$ is known but not $Y$, 
then the best treatment has average outcome value
\[
\EE \left[ \max_d \EE\left[ f_d (X,Y) \,|\, X\right] \right].
\]

EVPI, the expected value of perfect information, 
is the difference
\[
\mbox{EVPI} = 
\EE \left[ \max_d f_d (X,Y) \right]
- \max_d\, \EE[ f_d (X,Y) ],
\]
and EVPPI, the expected value of partial perfect information, 
is the difference
\[
\mbox{EVPPI} = 
\EE \left[ \max_d\, \EE\left[ f_d (X,Y) \,|\, X \right] \right]
- \max_d\, \EE[ f_d (X,Y) ].
\]
EVPPI represents the benefit, on average, of knowing the value of 
$X$.  If the value of $X$ represents the information arising from 
a proposed piece of medical research, then one can compare the cost 
of the research to the benefits which arise from the information 
obtained. 

In the oil and gas reservoir scenario \citep{bbl09,ngts16}, there are also decisions to 
be made, such as whether or not to drill additional exploratory wells.
There is huge uncertainty in various aspects of an oil reservoir, 
its dimensions, the oil and gas reserves it contains, the rock porosity,
etc.  An additional well will yield information which will reduce the
uncertainty and increase, on average, the amount of oil and gas which
will eventually be extracted. However, there is an additional cost in 
drilling one more well, and the EVPPI will help determine whether or not 
it is worth it.

The calculation of EVPPI is a nested expectation problem, with an outer 
expectation over $X$ and an inner conditional expectation over $Y$.
In this paper, we choose to focus on the estimation of the difference
\[
\mbox{EVPI} - \mbox{EVPPI} =
\EE \left[ \max_d\, f_d (X,Y) \right]
- \EE \left[ \max_d\, \EE\left[ f_d (X,Y) \,|\, X \right] \right].
\]
EVPI can be estimated directly using standard Monte Carlo methods 
with independent samples of $(X,Y)$
\[ \frac{1}{N}\sum_{n=1}^{N}\max_d f_d(X^{(n)},Y^{(n)}) - \max_d \frac{1}{N}\sum_{n=1}^{N}f_d(X^{(n)},Y^{(n)}). \]
Assuming each computation $f_d (X,Y)$ can be performed with unit 
cost, EVPI can be estimated with root-mean-square accuracy $\eps$
by using $N=O(\eps^{-2})$ samples $(X^{(n)}, Y^{(n)})$ at a total cost 
which is $O(\eps^{-2})$.
On the other hand, estimating the difference $\mbox{EVPI} - \mbox{EVPPI}$ 
using standard, nested Monte Carlo methods requires 
$N$ outer samples of $X$ and $M$ inner samples of $Y$, giving
\[ \frac{1}{N}\sum_{n=1}^{N}\left[ \frac{1}{M}\sum_{m=1}^{M}\max_d f_d(X^{(n)},Y^{(n,m)}) 
 - \max_d \frac{1}{M}\sum_{m=1}^{M}f_d(X^{(n)},Y^{(n,m)})\right]. \]
As shown in the next section, in order to estimate $\mbox{EVPI} - \mbox{EVPPI}$ 
with root-mean-square accuracy $\eps$ by this estimator, 
we need $N=O(\eps^{-2})$ and $M=O(\eps^{-1/\alpha})$ samples
for outer and inner expectations, respectively. 
Here $\alpha>0$ denotes the order of convergence of the bias 
and is typically between $1/2$ and $1$.
Therefore, the computational complexity will be at least $O(\eps^{-3})$, 
and increase up to $O(\eps^{-4})$ in the worst case.

The aim of this paper is to develop an efficient approach to this nested 
expectation problem, i.e., the estimation of $\mbox{EVPI} - \mbox{EVPPI}$, 
by using a Multilevel Monte Carlo (MLMC) method \citep{giles15}.
MLMC estimators have been used previously for nested expectations of the 
slightly different form $\EE[f(\EE[Y|X])]$ by \cite{hajiali12} and 
\cite{giles15} for cases in which $f$ is twice-differentiable, and by 
\cite{bhr15} for a case in which $f$ is continuous and piecewise linear.  
Current research \citep{gh18} is also looking at the case in which $f$ 
is a discontinuous indicator (Heaviside) function.

Building on this prior MLMC research, we 
introduce an antithetic MLMC estimator for $\mbox{EVPI} - \mbox{EVPPI}$
in the next section, and then in Section~\ref{sec:variance}, we provide sufficient assumptions 
on $f_d$'s such that the antithetic property of the estimator is well exploited, 
and by building upon the basic MLMC theorem (Theorem~\ref{thm:MLMC}), 
the estimator is proven to achieve the optimal computational complexity 
$O(\eps^{-2})$ (Theorem~\ref{thm:1}).
Numerical experiments in Section~\ref{sec:numerics} confirm the importance of the assumptions
made in our theoretical analysis, and also the considerable computational savings
compared to the standard, nested Monte Carlo method
not only for some simple testcases 
but also for a more realistic medical application.

\section{MLMC method}

\subsection{Basic MLMC theory}
The MLMC method was introduced by 
\citet{heinrich01} for parametric integration, 
and by \citet{giles08} for the estimation of the expectations 
arising from SDEs.
It was subsequently extended to SPDEs \citep[e.g.][]{cgst11}, 
stochastic reaction networks \citep{ah12}, 
and nested simulation \citep{hajiali12,bhr15}.
For an extensive review of MLMC methods, see the review 
by \citet{giles15}.

Here we give a brief overview of the MLMC method.
The problem we are interested in is to estimate $\EE[P]$ efficiently
for a random output variable $P$ which cannot be sampled exactly.
Given a sequence of random variables $P_0,P_1,\ldots$ which
approximate $P$ with increasing accuracy but also with increasing cost, 
we have the elementary telescoping summation
\begin{equation}
\EE[P_L] = \EE[P_0] + \sum_{\ell=1}^L \EE[P_\ell-P_{\ell-1}].
\label{eq:MLMC}
\end{equation}
The key idea behind the MLMC method is 
to independently estimate each of the 
quantities on the r.h.s.~of (\ref{eq:MLMC}) instead
of directly estimating the l.h.s., which is the standard Monte Carlo approach.
For the same underlying stochastic sample, $P_\ell$ and $P_{\ell-1}$ could be 
well correlated each other, and the variance of the correction $P_\ell-P_{\ell-1}$ 
is expected to get smaller as the level $\ell$ increases. Thus, in order to estimate 
each of the quantities on the r.h.s.~of (\ref{eq:MLMC}) with the same accuracy, 
the necessary number of samples for the finest levels becomes much smaller than
that for the coarsest levels, resulting in a significant reduction of 
the total computational cost as compared to the standard Monte Carlo method. 
This observation leads to the following theorem \citep{giles15}:

\begin{theorem}
\label{thm:MLMC}
Let $P$ denote a random variable, and let $P_\ell$ denote the 
corresponding level $\ell$ numerical approximation.
If there exist independent random variables $Z_\ell$ 
with expected cost $C_\ell$ and variance 
$V_\ell$, and positive constants 
$\alpha, \beta, \gamma, c_1, c_2, c_3$ such that 
$\alpha \geq \fracs{1}{2}\min(\beta,\gamma)$ and
\begin{itemize}
\item[i)] ~
$\displaystyle
\left| \EE[P_\ell - P] \right| \leq c_1 2^{-\alpha \ell}
$
\item[ii)] ~
$\displaystyle
\EE[Z_\ell] = \left\{ \begin{array}{ll}
\EE[P_0],                     & \ell=0 \\
\EE[P_\ell - P_{\ell-1}], & \ell>0
\end{array}\right.
$
\item[iii)] ~
$\displaystyle
V_\ell \leq c_2 2^{-\beta \ell}
$
\item[iv)] ~
$\displaystyle
C_\ell \leq c_3 2^{\gamma \ell},
$
\end{itemize}
then there exists a positive constant $c_4$ such that for any 
$\eps \!<\! e^{-1}$
there are values $L$ and $N_\ell$ for which the multilevel estimator
\[
\hat{Z} = \sum_{\ell=0}^L \hat{Z}_\ell \quad \text{with} \quad \hat{Z}_\ell = \frac{1}{N_\ell}\sum_{n=1}^{N_\ell}Z_\ell^{(n)},
\]
has a mean-square-error with bound
\[
MSE \equiv \EE\left[ \left(\hat{Z} - \EE[P]\right)^2\right] < \eps^2
\]
with a computational complexity $C$ with bound
\[
\EE[C] \leq \left\{\begin{array}{ll}
c_4 \eps^{-2}              ,    & \beta>\gamma, \\
c_4 \eps^{-2} (\log \eps)^2,    & \beta=\gamma, \\
c_4 \eps^{-2-(\gamma-\beta)/\alpha}, & \beta<\gamma.
\end{array}\right.
\]
\end{theorem}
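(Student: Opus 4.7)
The plan is to use the standard bias/variance decomposition of the mean-square-error and then optimize the sample allocation across levels. First I would observe that by assumption (ii) the multilevel estimator is unbiased for $\EE[P_L]$, so
\[
\mathrm{MSE} = \VV[\hat Z] + \bigl(\EE[P_L]-\EE[P]\bigr)^2 = \sum_{\ell=0}^L \frac{V_\ell}{N_\ell} + \bigl(\EE[P_L]-\EE[P]\bigr)^2.
\]
Using (i), the squared-bias term is at most $c_1^2 2^{-2\alpha L}$, so I would choose the smallest $L$ for which this is $\leq \eps^2/2$; this gives $L = \lceil \alpha^{-1}\log_2(\sqrt{2}\,c_1\eps^{-1})\rceil$, i.e.\ $L = O(\log \eps^{-1})$ with explicit constant depending on $\alpha, c_1$.

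Next I would allocate the $N_\ell$ to minimise the total expected cost $\sum_\ell N_\ell C_\ell$ subject to the remaining variance budget $\sum_\ell V_\ell/N_\ell \leq \eps^2/2$. A Lagrange-multiplier calculation gives the familiar rule $N_\ell \propto \sqrt{V_\ell/C_\ell}$, and rounding up one sets
\[
N_\ell = \bigl\lceil 2\eps^{-2}\sqrt{V_\ell/C_\ell}\ \textstyle\sum_{k=0}^L \sqrt{V_k C_k}\bigr\rceil.
\]
This choice makes the variance contribution at most $\eps^2/2$, and yields total cost
\[
\EE[C] \leq 2\eps^{-2}\Bigl(\sum_{\ell=0}^L \sqrt{V_\ell C_\ell}\Bigr)^{\!2} + \sum_{\ell=0}^L C_\ell.
\]
The second (rounding) term is bounded by a geometric sum dominated by $C_L = O(2^{\gamma L}) = O(\eps^{-\gamma/\alpha})$, which the hypothesis $\alpha \geq \tfrac12\min(\beta,\gamma)$ will show to be no worse than the main term in every case.

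The remaining work is to estimate $S_L := \sum_{\ell=0}^L \sqrt{V_\ell C_\ell}$ using (iii)--(iv), which gives $\sqrt{V_\ell C_\ell} \leq \sqrt{c_2 c_3}\,2^{(\gamma-\beta)\ell/2}$. I would then split into the three canonical cases: if $\beta>\gamma$ the geometric series converges, so $S_L = O(1)$ and the cost is $O(\eps^{-2})$; if $\beta=\gamma$ then $S_L = O(L) = O(\log \eps^{-1})$, giving the $\eps^{-2}(\log\eps)^2$ bound; if $\beta<\gamma$ then $S_L = O(2^{(\gamma-\beta)L/2})$ and substituting $L = O(\alpha^{-1}\log_2\eps^{-1})$ yields $S_L^2 = O(\eps^{-(\gamma-\beta)/\alpha})$ and hence the stated $\eps^{-2-(\gamma-\beta)/\alpha}$ bound.

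The step I expect to require the most care is verifying that the additive rounding cost and the boundary contributions from $\ell = 0$ and $\ell = L$ are absorbed into the main cost estimate in every case; this is precisely where the side condition $\alpha\geq \tfrac12\min(\beta,\gamma)$ enters, by ensuring that the unavoidable cost $C_L = O(\eps^{-\gamma/\alpha})$ of running a single sample at the finest level does not exceed the dominant complexity term. Once those bookkeeping inequalities are checked, collecting the implicit constants into a single $c_4$ completes the proof.
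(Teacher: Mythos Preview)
The paper does not actually give a proof of this theorem; it is stated with a citation to \citet{giles15}, where the full argument appears. Your proposal is precisely the standard proof from that reference: bias/variance decomposition of the MSE, choice of $L$ from assumption (i), Lagrange-multiplier allocation $N_\ell \propto \sqrt{V_\ell/C_\ell}$, and the three-case geometric-series analysis of $\sum_\ell \sqrt{V_\ell C_\ell}$, with the side condition $\alpha\geq\tfrac12\min(\beta,\gamma)$ used to absorb the rounding cost $\sum_\ell C_\ell = O(\eps^{-\gamma/\alpha})$ into the main term---so your approach is correct and matches the cited source.
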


\begin{remark}
\label{rem:alpha_beta}
In the case where the condition $V_\ell \leq c_2 2^{-\beta \ell}$ can be replaced by
$\EE[Z_\ell^2]\leq c_2 2^{-\beta \ell}$, H{\"o}lder's inequality gives
\[ \EE[Z_\ell] \leq \left( \EE[Z_\ell^2]\right)^{1/2}\leq \sqrt{c_2} 2^{-\beta \ell/2}. \]
Using the triangle inequality, we obtain
\begin{align*}
\left| \EE[P - P_L] \right| &  = \left| \sum_{\ell\geq L}\EE[P_{\ell+1} - P_\ell] \right| 
= \left| \sum_{\ell\geq L}\EE[Z_\ell] \right| \\
& \leq \sum_{\ell\geq L}\left| \EE[Z_\ell] \right| 
\leq \frac{\sqrt{c_2}}{1-2^{-\beta/2}}2^{-\beta L/2}.
\end{align*}
Compared this bound to the condition $\left| \EE[P_\ell - P] \right| \leq c_1 2^{-\alpha \ell}$,
we have $\alpha\geq \beta/2$ and so the assumption 
$\alpha \geq \fracs{1}{2}\min(\beta,\gamma)$ is simplified into $\alpha \geq \gamma/2$.
\end{remark}

As far as possible, we try to develop MLMC estimators 
which are in the first regime, with $\beta\!>\!\gamma$, 
so that the total cost is $O(\eps^{-2})$.  This corresponds to $O(\eps^{-2})$ samples 
each with an average $O(1)$ cost, and it means that most of the 
computational cost is incurred on the coarsest levels. When the 
application is in this regime, \citet{rg15} have a technique in which
they randomise the selection of the level $\ell$ to obtain a method 
which is unbiased but has a finite variance and average cost per sample.

Nevertheless, in any regime, 
Theorem~\ref{thm:MLMC} compares favourably with the complexity bound
for the standard Monte Carlo method which directly estimates 
the l.h.s.~of (\ref{eq:MLMC}) based on $N$ Monte Carlo samples
of $P_L$ for a fixed $L$:
\[
\hat{Z}' = \frac{1}{N}\sum_{n=1}^{N}P_L^{(n)}.
\]
In addition to the conditions given in Theorem~\ref{thm:MLMC},
assume $V:=\sup_{\ell}\VV[P_\ell]<\infty$.
For a given accuracy $\eps$, let us choose 
$N=\lceil 2V\eps^{-2}\rceil$ and
$L=\lceil \log_2(\sqrt{2}c_1\eps^{-1})/\alpha\rceil$,
so that the variance and the bias of the estimator are bounded simultaneously:
\[
\VV[\hat{Z}'] = \frac{\VV[P_L]}{N} \leq \frac{\VV[P_L]}{2V}\eps^2 \leq \frac{\eps^2}{2},
\]
and 
\[
\left(\EE[P-P_L]\right)^2 \leq \frac{c_1^2}{2^{2\alpha L}} \leq \frac{\eps^2}{2},
\]
which ensures the mean-square-error bound of $\hat{Z}'$
\[
\EE\left[(\hat{Z}'-\EE[P])^2\right] = \VV[\hat{Z}'] +\left(\EE[P-P_L]\right)^2\leq \eps^2.
\]
Then there exists a positive constant $c_5$ such that the expected cost of $\hat{Z}'$ is bounded by
\[
NC_L\leq \left(2V\eps^{-2}+1\right)c_3 2^{\gamma L} 
\leq \left(2V\eps^{-2}+1\right)c_3\left( \sqrt{2}c_12^\alpha\eps^{-1}\right)^{\gamma/\alpha} 
\leq c_5\eps^{-2-\gamma/\alpha}.
\]
In general, it seems hard to improve the exponent $2+\gamma/\alpha$ of $\eps^{-1}$.
Therefore, the multilevel estimator always has an asymptotically better complexity bound than
the standard Monte Carlo estimator.

\subsection{MLMC estimator for $\mbox{EVPPI}$}
In view of the previous subsection, for the estimation of the difference $\mbox{EVPI} - \mbox{EVPPI}$ 
let us define a random output variable $P$ by
\[ P = \EE\left[ \max_d f_d(X,Y)\, |\, X\right] - \max_d \EE\left[ f_d(X,Y)\, |\, X\right] \]
with the underlying stochastic variable $X$. Obviously $P$ is nothing but 
the inner conditional expectation of $\mbox{EVPI} - \mbox{EVPPI}$, and the problem
we tackle in this paper is rephrased into an efficient estimation of $\EE[P]$.
A sequence of random variables $P_0,P_1,\ldots$ is defined by
\[ P_\ell 
 = \frac{1}{2^\ell}\sum_{i=1}^{2^\ell}\max_d f_d(X,Y^{(i)}) 
 - \max_d \frac{1}{2^\ell}\sum_{i=1}^{2^\ell}f_d(X,Y^{(i)})
 =: \overline{\max_d\, f_d}^\ell - \max_d\, \ofd^\ell
\]
where $\overline{\max_d\, f_d}^\ell$ and $\ofd^\ell$ represent averages over $2^\ell$ 
independent values of $Y^{(i)}$ for a randomly chosen $X$, respectively.
That is to say, $P_\ell$ simply denotes the standard Monte Carlo estimator 
based on $2^\ell$ samples for the inner conditional expectation of $\mbox{EVPI} - \mbox{EVPPI}$, 
so that the sequence $P_0,P_1,\ldots$ approximate $P$ 
with increasing accuracy but also with increasing cost.
Namely we have
\[
\mbox{EVPI} - \mbox{EVPPI} = \EE[P] = 
\lim_{\ell\rightarrow\infty} \EE[P_\ell].
\]

As discussed above, in order to achieve a given accuracy $\eps$, 
the standard, nested Monte Carlo method chooses 
$N=O(\eps^{-2})$ and $M=O(2^L)=O(\eps^{-1/\alpha})$,
and so the computational complexity is $O(\eps^{-2-1/\alpha})$.
Using the MLMC method, this can be reduced significantly.
Following the ideas of \citet{hajiali12,bhr15,giles15}, we use 
an ``antithetic'' MLMC estimator 
\[
\hat{Z} = \sum_{\ell=1}^L \hat{Z}_\ell \quad \text{with} \quad \hat{Z}_\ell = \frac{1}{N_\ell}\sum_{n=1}^{N_\ell}Z_\ell^{(n)},
\]
in which 
\begin{align*}
Z_\ell & = \fracs{1}{2}\left(  \max_d \frac{1}{2^{\ell-1}}\sum_{i=1}^{2^{\ell-1}}f_d(X,Y^{(i)})
 +  \max_d \frac{1}{2^{\ell-1}}\sum_{i=2^{\ell-1}+1}^{2^\ell}f_d(X,Y^{(i)}) \right) \\
& \qquad - \max_d \frac{1}{2^\ell}\sum_{i=1}^{2^\ell}f_d(X,Y^{(i)}) \\
& =: \fracs{1}{2} \left( \max_d \ofd^{(a)} + \max_d \ofd^{(b)} \right) -\max_d \ofd
\end{align*}
where, for a randomly chosen $X$,
\begin{itemize}
\item
$\ofd^{(a)}$ is an average of $f_d(X,Y)$
over $2^{\ell-1}$ independent samples for $Y$;
\item
$\ofd^{(b)}$ is an average over a second independent 
set of $2^{\ell-1}$ samples;
\item
$\ofd$ is an average over the combined set of $2^{\ell}$
inner samples.
\end{itemize}
It is straightforward to see that 
$\gamma=1$ and $\EE[Z_\ell] = \EE[P_\ell \!-\! P_{\ell-1}]$ for $\ell>0$.
Here we consider $Z_0=P_0 \equiv 0$, 
so that the sum of the multilevel estimator over $\ell$ starts from $\ell=1$.

Note that we have the antithetic property $\fracs{1}{2}(\ofd^{(a)}+\ofd^{(b)}) - \ofd = 0$,
and therefore $Z_\ell\!=\!0$ if the same decision $d$ maximises each 
of the terms in its definition.  This is the key advantage of the 
antithetic estimator, compared to the alternative $\ofd^{(a)}-\ofd$.

\begin{remark}
It is straightforward to extend the antithetic MLMC approach to estimate $\mbox{EVPI}$.
The difference is that with $\mbox{EVPI}$ all of the underlying random variables $X$ and $Y$ 
are inner variables; non are outer variables leading to a conditional expectation.
Such an MLMC estimator for the maximum of an unconditional expectation 
has been introduced by \citet{bg15}. As discussed in the introduction, however, 
$\mbox{EVPI}$ can be estimated with $O(\eps^2)$ complexity 
by using standard Monte Carlo methods already, so that the benefit is 
that one could use a randomisation technique by \citet{rg15} to obtain 
an unbiased estimator, which might be marginal in the current setting.
\end{remark}

\section{MLMC variance analysis}\label{sec:variance}

We first show that the MLMC estimator achieves the nearly optimal complexity of $O(\eps^{-2} (\log \eps)^2)$ 
under a quite mild assumption.

\begin{theorem}
\label{thm:nearly-optimal}
If $\EE \left[ \VV [ f_d(X,Y) \,|\, X ]\right] $ is finite for all $d$, 
\[ \VV \left[ Z_\ell \right] \leq \EE \left[ \left| Z_\ell \right|^2 \right] \leq \frac{6|D|}{2^\ell}\sum_{d}\EE \left[ \VV [ f_d(X,Y) \,|\, X ]\right] .\]
\end{theorem}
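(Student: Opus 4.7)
The left inequality is immediate from $\VV[Z_\ell]=\EE[Z_\ell^2]-(\EE[Z_\ell])^2\le \EE[Z_\ell^2]=\EE[|Z_\ell|^2]$, so all the work is in the right inequality. The natural anchor is the (conditionally deterministic) quantity $\mu_d(X):=\EE[f_d(X,Y)\mid X]$, which is what $\ofd^{(a)}$, $\ofd^{(b)}$ and $\ofd$ are all estimating for the same realisation of $X$. Since $\tfrac{1}{2}\max_d \mu_d(X)+\tfrac{1}{2}\max_d \mu_d(X)-\max_d \mu_d(X)=0$, I can subtract it from each $\max$ in the definition of $Z_\ell$ without changing $Z_\ell$, and then apply the triangle inequality together with the elementary bound $|\max_d A_d - \max_d B_d|\le \sum_d|A_d-B_d|$ (used in this crude form, rather than with $\max_d|\cdot|$, since that is what produces the factor $|D|$ in the final answer) to obtain
\[
|Z_\ell|\le \tfrac{1}{2}\sum_d|\ofd^{(a)}-\mu_d|+\tfrac{1}{2}\sum_d|\ofd^{(b)}-\mu_d|+\sum_d|\ofd-\mu_d|.
\]

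Next, I would square using $(a+b+c)^2\le 3(a^2+b^2+c^2)$ and Cauchy--Schwarz $(\sum_d x_d)^2\le |D|\sum_d x_d^2$, giving
\[
|Z_\ell|^2 \le 3|D|\sum_d\!\Bigl(\tfrac{1}{4}(\ofd^{(a)}\!-\mu_d)^2+\tfrac{1}{4}(\ofd^{(b)}\!-\mu_d)^2+(\ofd-\mu_d)^2\Bigr).
\]
Now I take the expectation by conditioning on $X$: each averaged quantity is an unbiased estimator of $\mu_d(X)$, so conditionally its squared error equals its conditional variance, and since $\ofd^{(a)}$, $\ofd^{(b)}$ use $2^{\ell-1}$ i.i.d.\ samples and $\ofd$ uses $2^\ell$ i.i.d.\ samples of $f_d(X,\cdot)\mid X$, we get the standard decays $1/2^{\ell-1}$ and $1/2^\ell$ times $\VV[f_d(X,Y)\mid X]$. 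Summing the three coefficients yields $\tfrac14\cdot\tfrac{2}{2^\ell}+\tfrac14\cdot\tfrac{2}{2^\ell}+\tfrac{1}{2^\ell}=\tfrac{2}{2^\ell}$, and the tower property gives $\EE[|Z_\ell|^2]\le \tfrac{6|D|}{2^\ell}\sum_d \EE[\VV[f_d(X,Y)\mid X]]$, as claimed.

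The only genuine obstacle is the non-smoothness of the $\max_d$ operator, which prevents a direct variance calculation on $Z_\ell$. Introducing the deterministic reference $\max_d \mu_d(X)$ circumvents this by converting each $\max$ difference into a simple Lipschitz bound against a constant (given $X$). Notably, the argument uses only (i) independence within each batch of inner samples and (ii) independence between the two halves defining $\ofd^{(a)}$ and $\ofd^{(b)}$; no antithetic cancellation is exploited here, which is consistent with the fact that this bound only delivers $\beta=\gamma=1$ (hence the ``nearly optimal'' $O(\eps^{-2}(\log\eps)^2)$ cost). The stronger antithetic cancellation, needed for $\beta>\gamma$ and the optimal complexity, will have to come from additional structural assumptions on the $f_d$'s in the later results.
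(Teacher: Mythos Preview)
Your proof is correct and follows essentially the same route as the paper: anchor at $F_d(X)=\EE[f_d(X,Y)\mid X]$, use $|\max_d a_d-\max_d b_d|\le\sum_d|a_d-b_d|$, square via Cauchy--Schwarz, and evaluate the conditional second moments of the sample means. The only cosmetic difference is in the squaring step---the paper applies a weighted Jensen inequality $(\tfrac12 a+\tfrac12 b+c)^2\le a^2+b^2+2c^2$ directly, whereas you use $(a+b+c)^2\le 3(a^2+b^2+c^2)$ with the $\tfrac12$ factors retained inside---but both arrangements produce exactly the same constant $6|D|/2^\ell$.
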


\begin{proof}
For any two $|D|$-dimensional vectors with components $a_d, b_d$,
\[
\left| \max_d a_d - \max_d b_d \right|
\leq \max_d |a_d - b_d |
\leq \sum_d |a_d - b_d |.
\]
Hence, by defining $F_d(X) = \EE \left[f_d(X,Y) \,|\, X\right]$, we obtain
\begin{align}
\left| Z_\ell \right|  &= \left|\fracs{1}{2} (\max_d \ofd^{(a)} + \max_d \ofd^{(b)}) - \max_d \ofd \right| \nonumber \\
&=
\left| \fracs{1}{2} (\max_d \ofd^{(a)}-\max_d F_d)
     + \fracs{1}{2} (\max_d \ofd^{(b)}-\max_d F_d)
     -  (\max_d \ofd - \max_d F_d) \right| \nonumber \\
&\leq
\sum_d \left(
  \fracs{1}{2} |\ofd^{(a)}- F_d| 
+ \fracs{1}{2} |\ofd^{(b)}- F_d| 
+   |\ofd - F_d|
\right),
\label{eq:max_to_sum}
\end{align}
and therefore, by Jensen's inequality, 
\begin{align*}
\left| Z_\ell \right|^2  & = \left( \fracs{1}{2} (\max_d \ofd^{(a)} + \max_d \ofd^{(b)}) - \max_d \ofd \right)^{2} \\
&\leq
|D| \sum_d \left(
  \fracs{1}{2} |\ofd^{(a)}- F_d| 
+ \fracs{1}{2} |\ofd^{(b)}- F_d| 
+  |\ofd - F_d|
\right)^2 \\
&\leq
|D| \sum_d \left(
  |\ofd^{(a)}\!-\! F_d|^2 
+ |\ofd^{(b)}\!-\! F_d|^2 
+ 2 |\ofd     \!-\! F_d|^2
\right).
\end{align*}
For the last term in the summand on the right-most side, we have
\begin{eqnarray*}
\EE \left[ |\ofd - F_d|^2 \right] = \EE \left[ \EE \left[ |\ofd - F_d|^2 \, |\, X \right] \right] = \frac{1}{2^{\ell}}\EE \left[ \VV\left[ f_d(X,Y) \,|\, X\right] \right] .
\end{eqnarray*}
Similarly
\begin{eqnarray*}
\EE \left[ |\ofd^{(a)}- F_d|^2 \right] = \EE \left[ |\ofd^{(b)}- F_d|^2 \right] = \frac{1}{2^{\ell-1}}\EE \left[ \VV\left[ f_d(X,Y) \,|\, X\right] \right] .
\end{eqnarray*}
Hence, $\VV \left[ Z_\ell \right]$ is bounded by
\begin{align*}
\VV \left[ Z_\ell \right] & \leq \EE \left[ \left| Z_\ell \right|^2 \right] \\
& \leq |D|
\sum_d \left(
 \EE\left[ |\ofd^{(a)}- F_d|^2\right] 
+ \EE\left[ |\ofd^{(b)}- F_d|^2 \right] 
+ 2 \EE\left[ |\ofd     - F_d|^2 \right] 
\right). \\
& = \frac{6|D|}{2^\ell}\sum_{d}\EE \left[ \VV [ f_d(X,Y) \,| X ]\right] ,
\end{align*}
which completes the proof.
\end{proof}

The theorem shows that the parameters for the MLMC theorem are $\beta=1$, 
and in view of Remark~\ref{rem:alpha_beta}, $\alpha \geq 1/2$.
Since $\gamma=1$ by the definition of $Z_{\ell}$, 
the MLMC estimator is in the second regime, with $\beta=\gamma$, 
so that the total cost is $O(\eps^{-2} (\log \eps)^2)$.
This compares favourably with the cost of $O(\eps^{-2-1/\alpha})$ 
for the standard Monte Carlo estimator, where the exponent increases up to $4$
in the worst case. In the proof of the theorem, the antithetic property of the estimator, 
i.e., $\fracs{1}{2}(\ofd^{(a)}+\ofd^{(b)}) - \ofd = 0$, is not exploited.
In fact, the same upper bound on the variance can be obtained even for the alternative $\ofd^{(a)}-\ofd$.
In what follows, we prove a stronger result on the variance under somewhat demanding assumptions 
to exploit the antithetic structure of $Z_{\ell}$.

In fact, the MLMC variance can be analysed by following the approach 
used by \citet[][Theorem 5.2]{gs14}.
Define
\[
F_d(X) = \EE \left[f_d(X,Y) | X\right], ~~~~
d_{opt}(X) = \arg\max_{d} F_d(X)
\]
so the domain for $X$ is divided into a number of regions 
in which the optimal decision $d_{opt}(X)$ is unique, with a 
dividing decision manifold $K$ on which 
$d_{opt}(X)$ is not uniquely-defined.

Again note that $\fracs{1}{2}(\ofd^{(a)}+\ofd^{(b)}) - \ofd = 0$,
and therefore $Z_\ell =0$ if the same decision $d$ maximises each 
of the terms in its definition.
When $\ell$ is large and so there 
are many samples, $\ofd^{(a)}, \ofd^{(b)}, \ofd$ will all be close 
to $F_d(X)$, and therefore it is highly likely that 
$Z_\ell =0$ unless $X$ is very close to $K$ at which there
is more than one optimal decision.
This idea leads to an improved theorem on the MLMC variance,
but we first need to make three assumptions.

\begin{assumption}
\label{assp:1}
$\EE\left[ |f_d(X,Y)|^p \right]$ is finite for all $p\geq 2$.\\
\noindent
Comment: this enables us to bound the difference between 
$\ofd^{(a)}, \ofd^{(b)}, \ofd$ and $F_d(X)$.
\end{assumption}
\begin{assumption}
\label{assp:2}
There exists a constant $c_0 > 0$ such that for all $0 <\Eps<1$
\[
\PP\left( \min_{x\in K} \| X - x \| \leq \Eps\right)
\leq c_0 \Eps.
\]
Comment: this bounds the probability of $X$ being close 
to the decision manifold $K$.
\end{assumption}
\begin{assumption}
\label{assp:3}
There exist constants $c_1, c_2>0$ such that if
$X \notin K$,
then
\[
\max_d F_d(X) - \max_{d\neq d_{opt}(X)}  F_d(X) 
 > \min\left( c_1, c_2 \min_{x\in K} \| X - x \| \right).
\]
Comment: on $K$ itself there are at least 2 decisions $d_1, d_2$
which yield the same optimal value $F_d(X)$; this assumption 
ensures at least a linear divergence between the values as 
$X$ moves away from $K$.
\end{assumption}

\begin{theorem}
\label{thm:1}
If Assumptions \ref{assp:1}-\ref{assp:3} are satisfied, and 
$Z_\ell$ is as defined previously for level $\ell$, 
then for any $\delta >0$
\[
\VV \left[ Z_\ell \right] = o(2^{-(3/2-\delta)\ell}), \qquad
\EE \left[ Z_\ell \right] = o(2^{-(1-\delta)\ell}).
\]
\end{theorem}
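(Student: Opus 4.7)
The plan is to exploit the antithetic cancellation $\fracs{1}{2}(\ofd^{(a)} + \ofd^{(b)}) - \ofd = 0$ by decomposing according to whether $Z_\ell$ vanishes. Let $A_\ell = \{Z_\ell \neq 0\}$, write $d^\ast = d_{opt}(X)$, and define the conditional gap $\Delta(X) := F_{d^\ast}(X) - \max_{d \neq d^\ast} F_d(X)$. A sufficient condition for $Z_\ell = 0$ is that $d^\ast$ attains the maximum simultaneously in all three empirical averages $\ofd^{(a)}, \ofd^{(b)}, \ofd$, which triggers the antithetic cancellation; this is in turn implied by requiring each of $|\ofd^{(a)} - F_d(X)|, |\ofd^{(b)} - F_d(X)|, |\ofd - F_d(X)|$ to be less than $\Delta(X)/2$ for every $d$. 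By H\"older's inequality,
\[
\EE[Z_\ell^2] \leq \EE[|Z_\ell|^{2p}]^{1/p}\, \PP(A_\ell)^{(p-1)/p}, \qquad
|\EE[Z_\ell]| \leq \EE[|Z_\ell|^{p}]^{1/p}\, \PP(A_\ell)^{(p-1)/p},
\]
so it suffices to control the two factors separately for arbitrarily large $p$.

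For the high-moment factor, I would start from the bound (\ref{eq:max_to_sum}) used in the proof of Theorem~\ref{thm:nearly-optimal}, raise it to the $2p$-th power and apply Jensen, reducing everything to estimating $\EE[|\ofd^{(a)} - F_d(X)|^{2p}]$ and its two analogues. Conditioning on $X$, the Marcinkiewicz--Zygmund (or Rosenthal) inequality applied to the centred i.i.d.\ sum yields
\[
\EE\bigl[|\ofd^{(a)} - F_d(X)|^{2p} \,\big|\, X\bigr] \leq C_p\, 2^{-p\ell}\, \EE\bigl[|f_d(X,Y) - F_d(X)|^{2p} \,\big|\, X\bigr];
\]
taking expectations and invoking Assumption~\ref{assp:1} to ensure finiteness of all moments, we obtain $\EE[|Z_\ell|^{2p}] = O(2^{-p\ell})$, and hence $\EE[|Z_\ell|^{2p}]^{1/p} = O(2^{-\ell})$ and $\EE[|Z_\ell|^{p}]^{1/p} = O(2^{-\ell/2})$.

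To estimate $\PP(A_\ell)$, let $\delta(X) := \min_{x \in K} \|X - x\|$; by Assumption~\ref{assp:3}, $\Delta(X) \geq c_2 \Eps$ whenever $\delta(X) > \Eps$ and $\Eps$ is small. Combined with the sufficient condition above this gives $A_\ell \subseteq \{\delta(X) \leq \Eps\} \cup E(\Eps)$, where $E(\Eps)$ is the event that at least one of the $3|D|$ deviations $|\ofd^{(a)} - F_d|, |\ofd^{(b)} - F_d|, |\ofd - F_d|$ exceeds $(c_2/2)\Eps$. Assumption~\ref{assp:2} bounds the first piece by $c_0 \Eps$, while Chebyshev with the $2q$-th moment estimate bounds each term contributing to $E(\Eps)$ by $O(\Eps^{-2q}\, 2^{-q\ell})$. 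Balancing via $\Eps \sim 2^{-\ell q/(2q+1)}$ and letting $q \to \infty$ gives $\PP(A_\ell) = o\bigl(2^{-(1/2 - \delta')\ell}\bigr)$ for any $\delta' > 0$. Feeding this back into H\"older and sending $p \to \infty$ yields $\VV[Z_\ell] = o(2^{-(3/2-\delta)\ell})$ and $|\EE[Z_\ell]| = o(2^{-(1-\delta)\ell})$. I expect the main obstacle to be the simultaneous limiting argument in $p, q$ and $\Eps$: each passage sheds an arbitrarily small amount in the exponent, and one must check carefully that the overall loss can be absorbed into a single $\delta > 0$; a secondary point is verifying that the Marcinkiewicz--Zygmund constants are uniform in the conditioning on $X$, which is standard but needs to be stated in order to pass cleanly to the unconditional expectation.
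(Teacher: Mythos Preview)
Your proposal is correct and follows essentially the same route as the paper's proof: the decomposition via the event where the antithetic cancellation fails, the H\"older split into a high-moment factor and a small-probability factor, the moment bound via a BDG/Marcinkiewicz--Zygmund inequality applied conditionally on $X$, and the control of the failure probability by the union of a ``close to $K$'' event (Assumption~\ref{assp:2}) and a ``large fluctuation'' event (Lemma~\ref{lemma:1} plus Markov). The only cosmetic difference is that the paper fixes $\Eps = 2^{-(1/2-\delta/2)\ell}$ from the outset and chooses the H\"older/moment exponents once accordingly, which sidesteps the simultaneous $p,q,\Eps$ limiting you flagged as the main obstacle; your optimisation-then-limit argument reaches the same conclusion but with slightly more bookkeeping.
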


\noindent
{\it Comment: a similar $O(N^{-3/2})$ convergence rate for the 
variance is proved in Theorem 2.3 in \citep{bhr15} for a different nested simulation application.}

Before going into the detailed proof of the theorem, we give a heuristic explanation on the variance analysis below:
\begin{itemize}
\item Due to Assumption~\ref{assp:1} and Lemma~\ref{lemma:1} shown below, $\ofd-F_d=O(2^{-\ell/2})$;
\item Due to Assumption~\ref{assp:2}, there is an $O(2^{-\ell/2})$ probability of $X$
being within distance $O(2^{-\ell/2})$ from the decision manifold $K$, 
in which case $Z_\ell = O(2^{-\ell/2})$;
\item If $X$ is further away from $K$, Assumption~\ref{assp:3} ensures that
there is a clear separation between different decision values, and hence the antithetic property
of the estimator can be exploited well to give $Z_\ell=0$ with high probability;
\item This results in 
\begin{align*}
\EE[Z_\ell] & = O(2^{-\ell/2}) \times O(2^{-\ell/2}) = O(2^{-\ell}),\\
\EE[Z_\ell^2] & = O(2^{-\ell/2}) \times (O(2^{-\ell/2}))^2 = O(2^{-3\ell/2}),
\end{align*}
so that we have $\alpha\approx 1$ and $\beta\approx 3/2$.
\end{itemize}

To prepare for the proof of the main theorem, we first need a result 
concerning the deviation of an average of $N$ values from 
the expected mean. Suppose $X$ is a real random variable with zero mean, 
and let $\XN$ be an average of $N$ i.i.d.~samples 
$X_n, n= 1, 2, \ldots, N$.  
For $p=2$, we have 
$\EE[\XN^2] = N^{-1} \EE[X^2]$, and hence
$\PP[ |\XN| > c ] \leq \EE[X^2] / (c^2 N)$.
For larger values of $p$ for which $\EE[|X|^p]$ is finite, 
we have the following lemma:

\begin{lemma}
\label{lemma:1}
For $p\geq 2$, if $\EE[|X|^p]$ is finite then there exists a 
constant $C_p$, depending only on $p$, such that
\begin{align*}
\EE[ |\XN|^p] & \leq C_p N^{-p/2} \EE\left[ |X|^p \right], \\
\PP[ |\XN| > c ] & \leq C_p \EE[|X|^p] / (c^2 N)^{p/2}.
\end{align*}
\end{lemma}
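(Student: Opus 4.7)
The plan is to obtain the moment bound from a classical inequality for sums of independent mean-zero random variables, and then deduce the tail bound by a Markov argument.

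First, I would apply the Marcinkiewicz--Zygmund inequality: for any $p\geq 2$ and i.i.d.\ mean-zero $X_1,\ldots,X_N$ with $\EE[|X|^p]<\infty$, there is a constant $B_p$ depending only on $p$ such that
\[
\EE\!\left[\,\left|\sum_{n=1}^{N} X_n\right|^{p}\,\right] \leq B_p\, \EE\!\left[\,\Bigl(\sum_{n=1}^{N} X_n^{2}\Bigr)^{\!p/2}\,\right].
\]
(Equivalently, one could invoke Rosenthal's inequality, which gives a bound of the form $C_p \max\{N\EE[|X|^p],\,N^{p/2}(\EE[X^2])^{p/2}\}$; since $(\EE[X^{2}])^{p/2}\leq\EE[|X|^{p}]$ by Jensen and $N\leq N^{p/2}$ for $p\geq 2$, this yields the same conclusion.)

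Next, since $p/2\geq 1$, the power--mean inequality $(N^{-1}\sum a_n)^{p/2}\leq N^{-1}\sum a_n^{p/2}$ gives
\[
\Bigl(\sum_{n=1}^{N} X_n^{2}\Bigr)^{\!p/2} \leq N^{p/2-1}\sum_{n=1}^{N}|X_n|^{p},
\]
and taking expectations and using that the $X_n$ are identically distributed yields
\[
\EE\!\left[\Bigl(\sum_{n=1}^{N} X_n^{2}\Bigr)^{\!p/2}\right] \leq N^{p/2}\,\EE[|X|^{p}].
\]
Combining the two displays and dividing by $N^{p}$ using $\XN = N^{-1}\sum_{n=1}^{N} X_n$ gives the first claim with $C_p = B_p$.

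Finally, for the tail estimate I would apply Markov's inequality to the nonnegative random variable $|\XN|^{p}$:
\[
\PP[\,|\XN|>c\,] = \PP[\,|\XN|^{p}>c^{p}\,] \leq c^{-p}\,\EE[\,|\XN|^{p}\,] \leq C_p\,\EE[|X|^{p}]\,(c^{2}N)^{-p/2},
\]
which is the second claim. The only genuine obstacle is the first step; everything after it is just Jensen and Markov. If one prefers a self-contained argument rather than quoting Marcinkiewicz--Zygmund, the standard route is a symmetrization plus Khintchine's inequality, or a martingale difference / Burkholder argument, but for the present purposes citing the named inequality is cleanest.
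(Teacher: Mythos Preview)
Your proof is correct and essentially identical to the paper's: the paper invokes the discrete Burkholder--Davis--Gundy inequality where you invoke Marcinkiewicz--Zygmund, but for i.i.d.\ mean-zero summands both yield the same square-function bound $\EE[|\sum X_n|^p]\leq C_p\,\EE[(\sum X_n^2)^{p/2}]$, after which both proofs apply the same power-mean (Jensen) step and the same Markov tail bound.
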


\begin{proof}
The discrete Burkholder-Davis-Gundy inequality \citep{bdg72} gives us
\begin{align*}
\EE[ |\XN|^p]
& \leq C_p \EE\left[ \left(N^{-2} \sum_{n=1}^N X_n^2\right)^{p/2} \right] \\
&  \leq C_p \EE\left[ N^{-p/2-1} \sum_{n=1}^N |X_n|^p \right]
 = C_p N^{-p/2} \EE\left[ |X|^p \right],
\end{align*}
where $C_p$ is a constant depending only on $p$.  The second result
follows immediately from the Markov inequality.
\end{proof}

\begin{proof}[Proof of Theorem~\ref{thm:1}]
The analysis follows the approach used by \citet{ghm09} and
\citet[][Theorem 5.2]{gs14}.

For a particular value of $\delta$, we define
$\Eps = 2^{-(1/2-\delta/2)\ell}$, and consider the events
\begin{align*}
A &\equiv \left\{\min_{x\in K}\| X - x \| \leq \Eps \right\}, \\
B &\equiv \displaystyle \bigcup_d \left\{ 
\max\left( |\ofd^{(a)} - F_d |, |\ofd^{(b)} - F_d |, |\ofd - F_d | \right)
\geq \fracs{1}{2}c_2 \Eps \right\},
\end{align*}
where $c_2$ is as defined in Assumption \ref{assp:3}.

Using 
$\one_{A}$ to indicate the indicator function for event $A$, and
$A^c$ to denote the complement of $A$, we have
\begin{align}
& \EE\left[ \left(
\fracs{1}{2} (\max_d \ofd^{(a)} + \max_d \ofd^{(b)}) - \max_d \ofd
\right)^{2} \right]  \nonumber \\
& = 
\EE\left[ \left(
\fracs{1}{2} (\max_d \ofd^{(a)} + \max_d \ofd^{(b)}) - \max_d \ofd
\right)^{2} \one_{A \cup B} \right] \nonumber \\
& \quad +  
 \EE\left[ \left(
\fracs{1}{2} (\max_d \ofd^{(a)} + \max_d \ofd^{(b)}) - \max_d \ofd
\right)^{2} \one_{A^c \cap B^c} \right].
\label{eq:split}
\end{align}

Looking at the first of the two terms on the r.h.s.~of (\ref{eq:split}), 
then H{\"o}lder's inequality gives
\begin{align*}
& \EE\left[ \left(
\fracs{1}{2} (\max_d \ofd^{(a)} + \max_d \ofd^{(b)}) - \max_d \ofd
\right)^{2} \one_{A \cup B} \right]
 \\
& \leq
\EE\left[ \left(
\fracs{1}{2} (\max_d \ofd^{(a)} + \max_d \ofd^{(b)}) - \max_d \ofd
\right)^{2p} \right]^{1/p} 
\left( \PP(A) + \PP(B) \right)^{1/q}
\end{align*}
for any $p,q\geq 1$, with $p^{-1}+q^{-1}=1$.

Now, $\PP(A) \leq c_0\, \Eps$ due to Assumption \ref{assp:1}, 
and
\[
\PP(B) \leq \sum_d \left(
  \PP( |\ofd^{(a)}-F_d| \geq \fracs{1}{2}\Eps)
+ \PP( |\ofd^{(b)}-F_d| \geq \fracs{1}{2}\Eps)
+ \PP( |\ofd-F_d| \geq \fracs{1}{2}\Eps)
\right).
\]
Due to Lemma \ref{lemma:1},
\[
\PP(\, |\ofd -F_d| \geq \fracs{1}{2}\Eps \, | \, X)
= \EE\left[ {\bf 1}_{|\ofd -F_d| \geq \fracs{1}{2}\Eps} \, | \, X\right]
\leq C_m  \EE\left[ |f_d-F_d|^m \, | \, X \right] / (\Eps^2 2^\ell)^{m/2},
\]
for any $m\geq 2$. Taking an outer expectation with respect to $X$, 
the tower property then gives
\[
\PP( |\ofd  -F_d| \geq \fracs{1}{2}\Eps )
= \EE\left[ {\bf 1}_{|\ofd -F_d| \geq \fracs{1}{2}\Eps} \right]
\leq C_m \EE\left[ |f_d-F_d|^m \right] / (\Eps^2 2^\ell)^{m/2}.
\]

Similar bounds exists for 
$\PP(|\ofd^{(a)}-F_d| \geq \fracs{1}{2}\Eps)$ and
$\PP(|\ofd^{(b)}-F_d| \geq \fracs{1}{2}\Eps)$.
We can take $m$ to be sufficiently large so that
$
\fracs{1}{2} m - \fracs{1-\delta}{2} m > \fracs{1-\delta}{2}
$
and hence $\PP(B)= o(2^{-(1-\delta)\ell/2})$. Then, $q$ can 
be chosen sufficiently close to 1 so that
$
\left( \PP(A) + \PP(B) \right)^{1/q} \ =\ o(2^{-(1/2-\delta)\ell}).
$

Applying Jensen's inequality to (\ref{eq:max_to_sum}) twice, we obtain
\begin{align*}
& \left( \fracs{1}{2} (\max_d \ofd^{(a)} + \max_d \ofd^{(b)}) - \max_d \ofd \right)^{2p} \\
&\leq
|D|^{2p-1}
\sum_d \left(
  \fracs{1}{2} |\ofd^{(a)}- F_d| 
+ \fracs{1}{2} |\ofd^{(b)}- F_d| 
+ |\ofd- F_d|
\right)^{2p} \\
&\leq
(2|D|)^{2p-1}
\sum_d \left(
  \fracs{1}{2} |\ofd^{(a)}- F_d|^{2p} 
+ \fracs{1}{2} |\ofd^{(b)}- F_d|^{2p} 
+              |\ofd - F_d|^{2p}
\right).
\end{align*}
It follows from Lemma \ref{lemma:1} that
\begin{align*}
\EE[\,|\ofd - F_d|^{2p}] 
& =  \EE\left[ \EE[|\ofd - F_d|^{2p} \, | \, X] \right] \\
&\leq C_{2p} 2^{-p \ell} \EE\left[ \EE[ |f_d(X,Y) - \EE[f_d(X,Y)\,|\, X]|^{2p} \, | \, X] \right]\\
&=    C_{2p} 2^{-p \ell} \EE[ |f_d(X,Y) - \EE[f_d(X,Y)]|^{2p}],
\end{align*}
so Assumption \ref{assp:1} implies that
$\EE[|\ofd - F_d|^{2p}] = O(2^{-p \ell})$, with similar 
bounds for $\ofd^{(a)}$ and $\ofd^{(b)}$.  Hence,
\[
\EE\left[ \left(
\fracs{1}{2} (\max_d \ofd^{(a)} + \max_d \ofd^{(b)}) - \max_d \ofd
\right)^{2p} \right]^{1/p} 
= O(2^{-\ell}),
\]
and therefore the first term on the r.h.s.~of (\ref{eq:split})
has bound $o(2^{-(3/2-\delta)\ell})$.

We now consider the second term on the r.h.s.~of (\ref{eq:split}).
For any sample in $A^c\cap B^c$, we have
$\displaystyle
\min_{x\in K}\| X - x \| \geq \Eps,
$
and
$
|\ofd^{(a)} - F_d | < \fracs{1}{2}c_2 \Eps , ~
|\ofd^{(b)} - F_d | < \fracs{1}{2}c_2 \Eps , ~
|\ofd        - F_d | < \fracs{1}{2}c_2 \Eps,
$
for all $d$.
For a particular outer sample $X$, if 
$d \neq d_{opt}(X)$ then using Assumption \ref{assp:3} we have
\begin{align*}
\ofd_{opt} - \ofd & = ({F_d}_{opt} - F_d) + (\ofd_{opt}-{F_d}_{opt}) - (\ofd-F_d) \\
             & > \min(c_1, c_2 \Eps) - \fracs{1}{2} c_2 \Eps  - \fracs{1}{2} c_2 \Eps 
 = \min(c_1 - c_2 \Eps,  0)
\end{align*}
If $\ell$ is sufficiently large so that $c_2 \Eps < c_1$, then $\ofd_{opt} - \ofd > 0$ 
and hence $d_{opt} = \arg \max_d \ofd$.  The same argument applies to 
$\ofd_{opt}^{(a)} - \ofd^{(a)}$ and $\ofd_{opt}^{(b)} - \ofd^{(b)}$, so the conclusion is 
that in all three cases, $d_{opt}$ is the decision which maximises 
$\ofd^{(a)}$, $\ofd^{(b)}$ and $\ofd$, and therefore
\[
\fracs{1}{2} (\max_d \ofd^{(a)} + \max_d \ofd^{(b)}) - \max_d \ofd
=\fracs{1}{2} ( \ofd_{opt}^{(a)} +  \ofd_{opt}^{(b)}) - \ofd_{opt}
=0.
\]
Hence, for sufficiently large $\ell$, the second term is zero,
which concludes the proof for the bound on $\VV[Z_\ell]$ and
the bound on $\EE[Z_\ell]$ is obtained similarly.
\end{proof}

The conclusion from the theorem is that the parameters for the MLMC 
theorem are $\beta \!\approx\! 3/2$, $\alpha\!\approx\! 1$, and 
$\gamma \!=\! 1$, giving the optimal complexity of $O(\eps^{-2})$.
Again, this compares favourably with the cost of $O(\eps^{-3})$ for the standard Monte Carlo estimator.

\section{Numerical results}\label{sec:numerics}

\subsection{Simple test cases}

To validate the importance of the assumptions made in the variance analysis,
several simple examples are tested here.
Let $X$ and $Y$ be independent univariate standard normal random variables, 
and let us consider two-treatment decision problems with $f_1(X,Y)=0$ and either
\begin{enumerate}
\item $f_2(X,Y)=X+Y$,
\item $f_2(X,Y)=X^3+Y$, or
\item $f_2(X,Y)=\begin{cases}
X+Y+1, & X<-1, \\
Y, & -1\leq X\leq 1, \\
X+Y-1, & X>1.
\end{cases} $
\end{enumerate}
It is easy to check that this simple test case with the first choice of $f_2$ satisfies 
all of Assumptions \ref{assp:1}-\ref{assp:3}, while the other cases with the second and 
third choices of $f_2$ do not.
With the second choice of $f_2$, we have $F_1(X)=0$ and $F_2(X)=X^3$, so that $K=\{0\} \subset \RR$ and 
\[
\max_d F_d(X) - \max_{d\neq d_{opt}(X)}  F_d(X) \ = \ |X|^3,
\]
which implies that there exist no constants $c_1,c_2>0$ such that Assumption \ref{assp:3} is satisfied.
For the third choice of $f_2$, we have $K=[-1, 1]\subset \RR$ whose probability measure is not zero. 
Hence, by considering the limiting situation $\Eps\to 0$ in Assumption \ref{assp:2}, 
we see that there exists no constant $c_0>0$ such that Assumption \ref{assp:2} is satisfied.

The results for the first choice of $f_2$ are shown in Figure \ref{fig:test1}.
The left top plot shows the behaviours of the variances of both $P_{\ell}$ and $Z_{\ell}$, 
where the variances are estimated by using $N=2\times 10^5$ random samples at each level.
Note that the logarithm of the empirical variance in base $2$ versus the level is plotted here.
The slope of the line for $Z_{\ell}$ is $-1.43$, indicating that $\VV[Z_{\ell}]=O(2^{-1.43 \ell})$.
This result is in good agreement with Theorem \ref{thm:1} which holds for decision models 
satisfying Assumptions \ref{assp:1}-\ref{assp:3}.

\begin{figure}
\centering
\includegraphics[width=\textwidth]{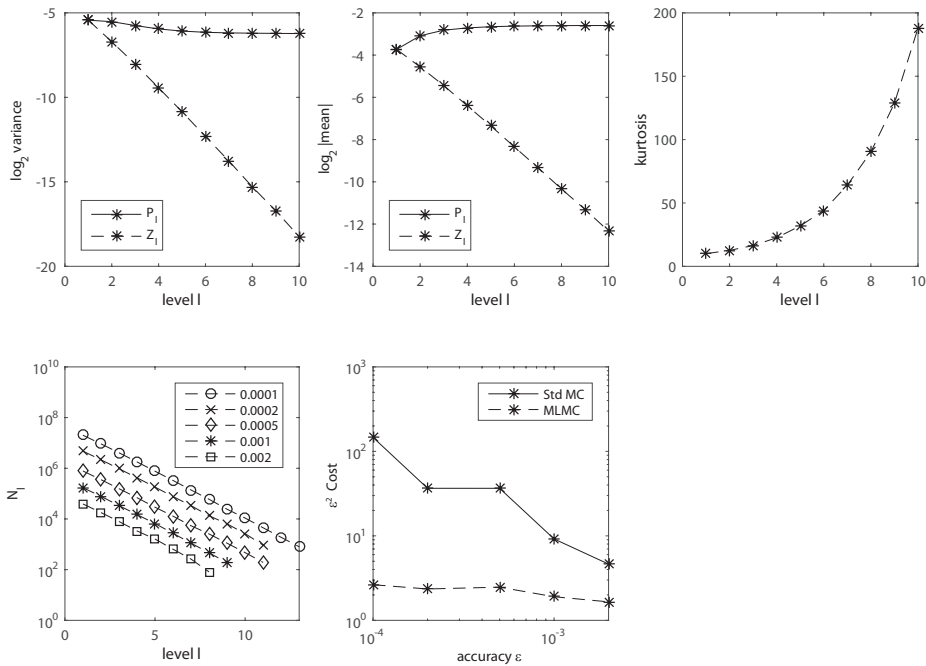}
\caption{MLMC results for simple test case with the first choice of $f_2$.}
\label{fig:test1}
\end{figure}

The middle top plot shows the behaviours of the estimated mean values of both $P_{\ell}$ and $Z_{\ell}$.
The slope of the line for $Z_{\ell}$ is approximately $-1$, which implies that $\EE[Z_{\ell}]=O(2^{- \ell})$.
This is again in good agreement with Theorem \ref{thm:1}.

The right top plot shows the behaviour of the estimated kurtosis of $Z_{\ell}$.
The way in which the kurtosis increases with the level also confirms that 
the MLMC corrections are increasingly dominated by a few rare samples 
yielding $Z_{\ell}\neq 0$, corresponding to outer samples $X$ which are close 
to the decision manifold $K$ across which the optimal decision $d_{opt}$ changes.

Using the implementation due to \citet[][Algorithm 1]{giles15}, the maximum level $L$ and 
the computational costs $N_{\ell}$ for levels $\ell=1,\ldots,L$, required for the combined multilevel estimator 
to achieve an MSE less than $\eps^2$, are estimated.
Each line in the left bottom plot shows the values of $N_{\ell}$, $\ell=1,\ldots,L$, for a particular value of $\eps$.
As expected, the number of samples varies with the level such that
many more samples are allocated on the coarsest levels,
which is in good agreement with the optimal allocation of
computational effort given by 
$N_\ell\propto \eps^{-2}\sqrt{V_\ell/C_\ell}\propto \eps^{-2} 2^{-(\beta+\gamma)\ell/2}$ \citep{giles15}.
It is also shown here that, as the value of $\eps$ decreases, the maximum level $L$ increases to 
ensure the weak convergence $|\EE\left[P-P_L \right]|\leq \eps/\sqrt{2}$.

The middle bottom plot shows the behaviour of the total computational cost
\[
C = \sum_{\ell=1}^{L}2^{\ell}N_{\ell},
\]
to achieve an MSE less than $\eps^2$.
Since it is expected from the MLMC theorem that $\eps^2 C$ is independent of $\eps$, we plot $\eps^2 C$ 
versus $\eps$ here.
Indeed, it can be seen that $\eps^2 C$ is only slightly dependent on $\eps$, indicating that the MLMC estimator 
gives the optimal complexity of $O(\eps^{-2})$.
This result compares favourably with the result for the standard (in this case, nested) Monte Carlo method.
The superiority of the MLMC method becomes more evident as the desired accuracy $\eps$ decreases.
For instance, for $\eps=10^{-4}$, the MLMC method is more than 50 times more efficient.

Let us move on to the second and third choices of $f_2$.
Since these test cases do not satisfy one of Assumptions \ref{assp:1}-\ref{assp:3}, Theorem \ref{thm:1} 
does not apply and it is expected from Theorem \ref{thm:nearly-optimal} that the MLMC estimator 
achieves the nearly optimal complexity of $O(\eps^{-2} (\log \eps)^2)$.
The results for the second and third choices of $f_2$ are shown in Figures \ref{fig:test2} and \ref{fig:test3}, respectively.

For the second choice of $f_2$, it is seen from the first two top plots that the slopes of the lines 
for the variance and the mean value of $Z_{\ell}$ are $-1.12$ and $-0.64$, respectively, 
which are slightly better than the values $-1$ and $-0.5$ which are to be expected from the theory.
In the right top plot, the kurtosis increases with the level but not so significantly as compared to the first test case.
Because of a smaller value of $\alpha$, we can observe in the left bottom plot 
that the maximum level to ensure the weak convergence becomes large.
Still, the superiority of the MLMC method over the standard Monte Carlo method is prominent.
For $\eps=10^{-4}$, the MLMC method is approximately 3000 times more efficient.
Similar results are also obtained for the third choice of $f_2$.

\begin{figure}
\centering
\includegraphics[width=\textwidth]{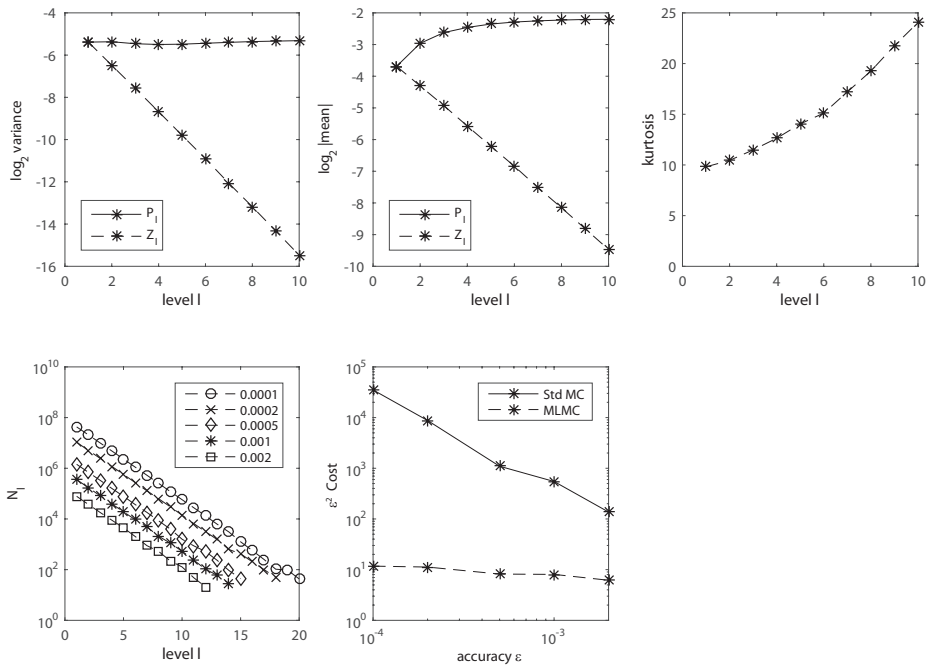}
\caption{MLMC results for simple test case with the second choice of $f_2$.}
\label{fig:test2}
\end{figure}

\begin{figure}
\centering
\includegraphics[width=\textwidth]{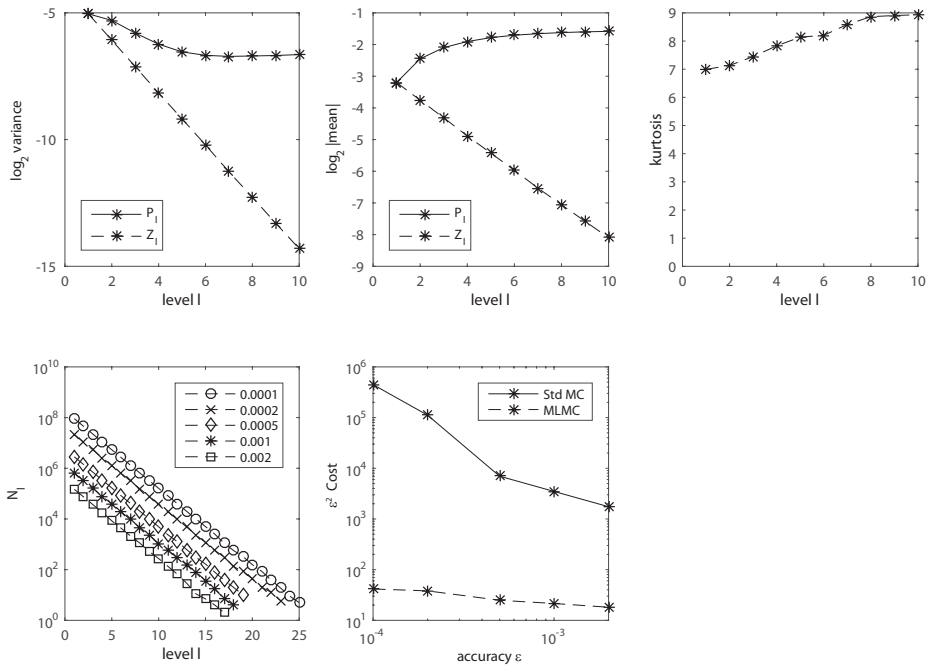}
\caption{MLMC results for simple test case with the third choice of $f_2$.}
\label{fig:test3}
\end{figure}

\subsection{Medical decision model}

To demonstrate the practical usefulness of the MLMC estimator, 
the medical decision model introduced in \citet{bkoc07} is tested.
Let $X\cup Y=(X_1,\ldots,X_{19})$ with each univariate random variable $X_j$ 
following the normal distribution with mean $\mu_j$ and standard deviation $\sigma_j$ independently 
except that $X_5,X_7,X_{14},X_{16}$ are pairwise correlated with a correlation coefficient $\rho=0.6$.
The values for $\mu_j$ and $\sigma_j$ and the medical meaning of $X_j$ are listed in Table \ref{table:1}.
The problem to be tested is a two-treatment decision problem with 
\begin{eqnarray*}
f_1(X,Y) & = & \lambda \left( X_5 X_6 X_7 + X_8 X_9 X_{10}\right) - (X_1 + X_2 X_3 X_4), \ \text{and} \\
f_2(X,Y) & = & \lambda \left( X_{14} X_{15} X_{16} + X_{17} X_{18} X_{19}\right) - (X_{11} + X_{12} X_{13} X_4),
\end{eqnarray*}
where $\lambda$ denotes the monetary valuation of health and is set to $10^4$ ($\pounds$).
In what follows, we call this decision model the BKOC test case, named after the authors of \citet{bkoc07}.

\begin{table}
  \caption{Variables in the BKOC test case (Table 2 in \cite{bkoc07}).}
  \label{table:1}
  \centering
  \begin{tabular}{cccc}
    \hline
    variable & $\mu_j$  & $\sigma_j$ & meaning \\
    \hline 
    $X_1$ & 1000 & 1 & Cost of drug ($\pounds$) \\
    $X_2$ & 0.1 & 0.02 & Probability of admissions \\
    $X_3$ & 5.2 & 1.0 & Days in hospital \\
    $X_4$ & 400 & 200 & Cost per day ($\pounds$) \\
    $X_5$ & 0.7 & 0.1 & Probability of responding \\
    $X_6$ & 0.3 & 0.1 & Utility change if response \\
    $X_7$ & 3.0 & 0.5 & Duration of response (years) \\
    $X_8$ & 0.25 & 0.1 & Probability of side effects \\
    $X_9$ & -0.1 & 0.02 & Change in utility if side effect \\
    $X_{10}$ & 0.5 & 0.2 & Duration of side effect (years) \\
    $X_{11}$ & 1500 & 1 & Cost of drug  ($\pounds$) \\
    $X_{12}$ & 0.08 & 0.02 & Probability of admissions \\
    $X_{13}$ & 6.1 & 1.0 & Days in hospital \\
    $X_{14}$ & 0.8 & 0.1 & Probability of responding \\
    $X_{15}$ & 0.3 & 0.05 & Utility change if response \\
    $X_{16}$ & 3.0 & 1.0 & Duration of response (years) \\
    $X_{17}$ & 0.2 & 0.05 & Probability of side effects \\
    $X_{18}$ & -0.1 & 0.02 & Change in utility if side effect \\
    $X_{19}$ & 0.5 & 0.2 & Duration of side effect (years) \\
    \hline
  \end{tabular}
\end{table}

The results for the BKOC test case with $X=(X_5,X_{14})$ are shown in Figure \ref{fig:bkoc1}.
From the first two top plots we see that the slopes of the lines 
for the variance and the mean value of $Z_{\ell}$ are $-1.352$ and $-0.89$, respectively,
indicating that the MLMC estimator is in the first regime, with $\beta>\gamma$.
The behaviour of the kurtosis of $Z_{\ell}$, shown in the right top plot, is quite similar 
to that observed for the simple test case with the first choice of $f_2$.
As expected, most of the computational cost is actually incurred on the coarsest levels, and 
the MLMC method gives savings of factor more than 100 as compared to the standard Monte Carlo method 
for the desired accuracy $\eps = 0.1$.

As shown in Figure \ref{fig:bkoc2} and \ref{fig:bkoc3}, respectively,
both of the results for the BKOC test case with $X=(X_5,X_6,X_{14},X_{15})$ and $X=(X_7,X_{16})$ are quite similar to
the case with $X=(X_5,X_{14})$, and the MLMC method gives savings of factor up to 100.

In order to achieve an MSE less than 1, the MLMC method needs the total computational costs of 
$C=4.1\times 10^7, 3.0\times 10^7, 2.2\times 10^7$ for the three respective cases, 
giving the estimates of the difference $\mbox{EVPI} - \mbox{EVPPI}$ as 799, 206, and 509.
The total computational costs for the standard Monte Carlo method are found to be approximately 10 times larger for all cases.
The standard Monte Carlo method using $10^7$ random samples of $(X,Y)$ yields the estimate of EVPI as 1047.
Thus, the EVPPI values for the three cases are estimated as 248, 841 and 538.

\begin{figure}
\centering
\includegraphics[width=\textwidth]{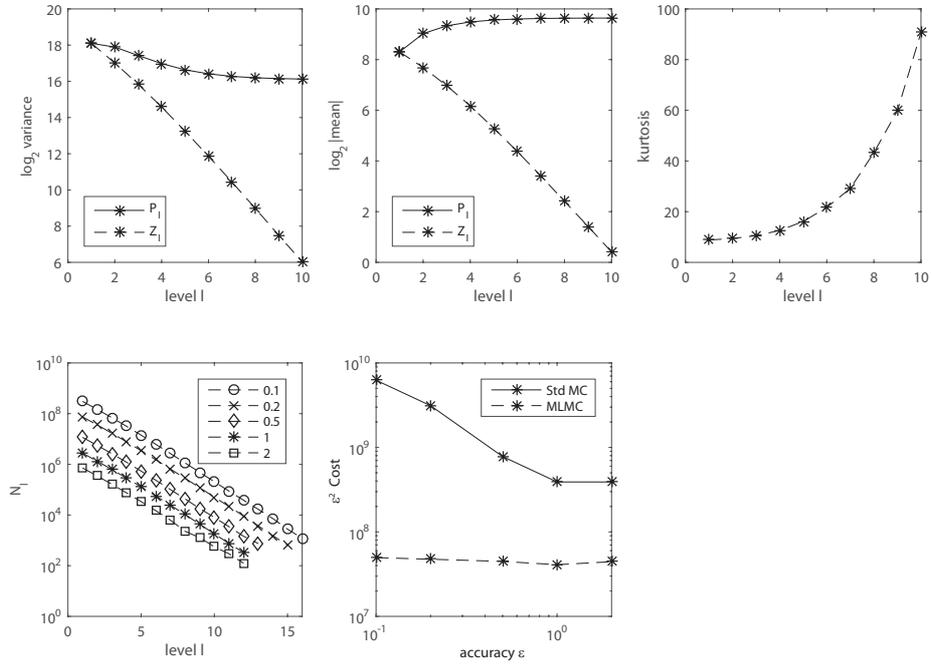}
\caption{MLMC results for the BKOC test case with $X=(X_5,X_{14})$.}
\label{fig:bkoc1}
\end{figure}

\begin{figure}
\centering
\includegraphics[width=\textwidth]{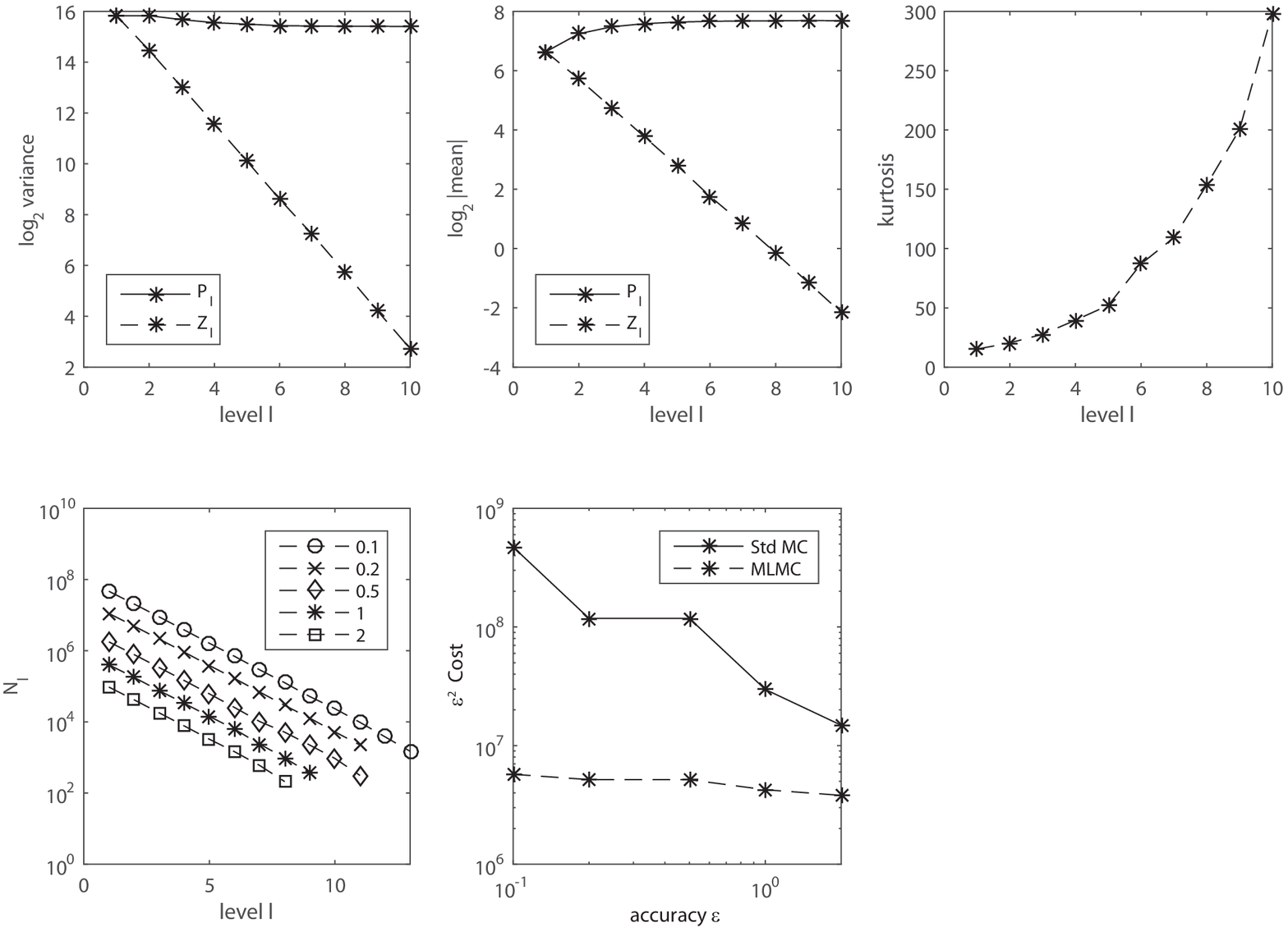}
\caption{MLMC results for the BKOC test case with $X=(X_5,X_6,X_{14},X_{15})$.}
\label{fig:bkoc2}
\end{figure}

\begin{figure}
\centering
\includegraphics[width=\textwidth]{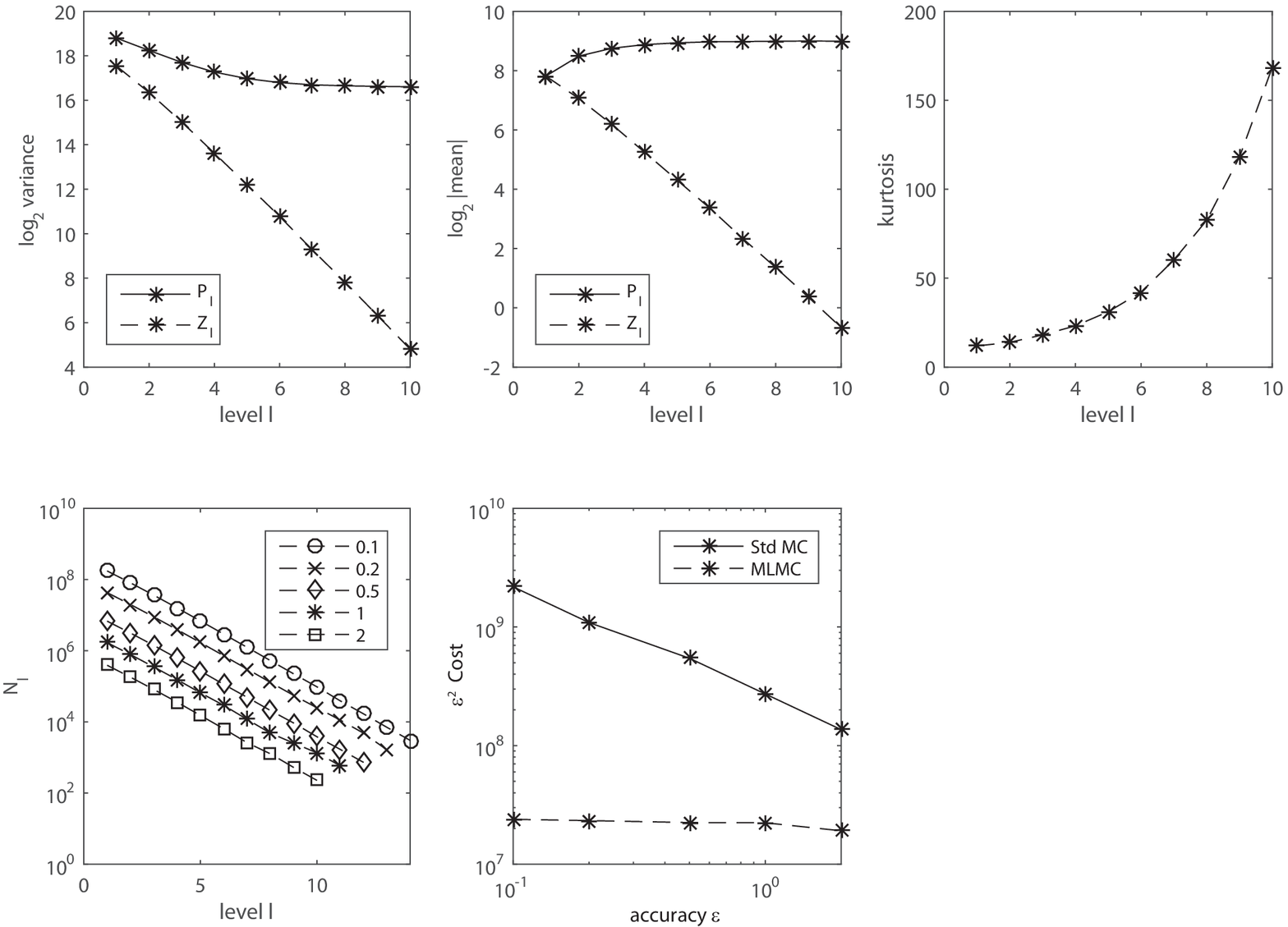}
\caption{MLMC results for the BKOC test case with $X=(X_7,X_{16})$.}
\label{fig:bkoc3}
\end{figure}

\section{Conclusions}

In this paper we have developed a Multilevel Monte Carlo method for the 
estimation of the expected value of partial perfect information, EVPPI,
which is one of the most demanding nested expectation applications.
The essential difficulty in the theoretical analysis lies in how to deal with 
the maximum of an unconditional expectation. We provide a set of assumptions 
on a decision model to exploit the antithetic property of the estimator, and then
numerical analysis proves that a root-mean-square accuracy of $\eps$ 
can be achieved at a computational cost which is $O(\eps^{-2})$, and 
this is also supported by numerical experiments. As we already announced in
\citep{ggtfw17}, the MLMC estimator introduced in this paper works
quite well for \emph{real} medical application which measures the cost-effectiveness of
novel oral anticoagulants in atrial fibrillation. 
The details on this application shall be summarised in the near future.

Future research will address the following topics:
\begin{itemize}
\item
an extension to handle input distributions which are defined empirically,
such as through the use of MCMC methods to sample from a Bayesian posterior 
distribution;

\item
the use of quasi-random numbers in place of pseudo-random numbers, 
which leads to the Multilevel Quasi-Monte Carlo method which is 
capable of additional substantial savings \citep{gw09};

\item
the use of an adaptive number of inner samples, following the ideas 
of \cite{bdm11}, since it is only the outer samples which are near the 
decision manifold $K$ which require great accuracy for the inner 
conditional expectation.
\end{itemize}

\section*{Acknowledgements}
The authors would like to thank Dr.~Howard Thom of the University of Bristol for useful discussions and comments.
The research of T. Goda was supported by JSPS Grant-in-Aid for Young Scientists (No.~15K20964) 
and Arai Science and Technology Foundation.

\end{document}